\documentclass[12pt]{amsart}

\usepackage{amssymb}
\usepackage{amscd}
\usepackage[bookmarks=true]{hyperref}
\hypersetup{colorlinks,citecolor=blue,linkcolor=blue}

\setlength{\textwidth}{6.0in}
\setlength{\oddsidemargin}{0.25in}
\setlength{\evensidemargin}{0.25in}

\setlength{\marginparwidth}{0.9in}
\setlength{\marginparsep}{0.1in}

\setlength{\voffset}{0.25in}

\vfuzz2pt 
\hfuzz2pt 

\newtheorem{theorem}{Theorem}[section]

\newtheorem{prop}[theorem]{Proposition}
\newtheorem{cor}[theorem]{Corollary}
\theoremstyle{definition}

\newtheorem{rmk}[theorem]{Remark}

\newcommand\Hy{\mathbb{H}}

\newcommand\pr{\mathrm{P}\rho}

\newcommand\Z{\mathbb{Z}}
\newcommand\Q{\mathbb{Q}}
\newcommand\R{\mathbb{R}}
\newcommand\C{\mathbb{C}}
\newcommand\cO{\mathcal{O}_k}
\newcommand\cOl{\mathcal{O}_\ell}
\newcommand\cQ{\mathcal{Q}}

\newcommand\area{{\rm area}}
\newcommand\sys{{\rm sys}}
\newcommand\sysg{{\rm sysg}}
\newcommand\hg{{\rm Hg}}
\newcommand\rk{{\rm rk}}
\newcommand\vol{{\rm vol}}
\newcommand\Nfr{\mathcal{N}_{fr}}

\newcommand{\pinj}{$\pi_1$-injective }

\DeclareMathOperator{\SO}{SO}
\DeclareMathOperator{\PO}{PO}

\DeclareMathOperator{\PSL}{PSL}

\providecommand{\norm}[1]{\lVert#1\rVert}

\begin{document}
\title{On $2$-systoles of hyperbolic $3$-manifolds}


\author{Mikhail Belolipetsky}\thanks{Supported by a CNPq research grant.}
\address{
IMPA\\
Estrada Dona Castorina, 110\\
22460-320 Rio de Janeiro, Brazil}
\email{mbel@impa.br}

\begin{abstract}
We investigate the geometry of \pinj surfaces in closed hyperbolic $3$-manifolds. First we prove that for any $\epsilon>0$, if the manifold $M$ has sufficiently large systole $\sys_1(M)$, the genus of any such surface in $M$ is bounded below by $\exp((\frac12-\epsilon)\sys_1(M))$. Using this result we show, in particular, that for congruence covers $M_i\to M$ of a compact arithmetic hyperbolic $3$-manifold we have: (a) the minimal genus of \pinj surfaces satisfies $\log \sysg(M_i) \gtrsim \frac13 \log \vol(M_i)$; (b) there exist such sequences with the ratio Heegard genus$(M_i)/\sysg(M_i) \gtrsim \vol(M_i)^{1/2}$; and  (c) under some additional assumptions $\pi_1(M_i)$ is $k$-free with $\log k \gtrsim \frac13\sys_1(M_i)$. The latter resolves a special case of a conjecture of M.~Gromov.
\end{abstract}

\maketitle

\section{Introduction}

Let $M$ be a closed hyperbolic $3$-manifold defined arithmetically and $M_i \to M$ be a sequence of its congruence covers. We will recall the precise definitions later on in the paper restricting our attention by now to some general principles. The arithmetically defined manifolds $M_i$ possess a set of remarkable geometrical and topological properties. Their systole length grows proportionally to the logarithm of the volume, which is asymptotically the fastest possible growth. An analogues property is known to hold for the girth of certain families of expander graphs. In \cite{Gromov:GAFA}, Gromov proved that any generic smooth map $F: M_i \to \R$ has a fiber $F^{-1}(x)\subset M_i$ with the sum of its Betti numbers $\gtrsim \vol(M_i)$. He compares this complicated fiber property with the basic property of expanders: if $\Gamma_i$ is a family of expander graphs and $\Gamma_i$ has $N_i$ edges, then any continuous map $F: \Gamma_i \to \R$ has a fiber that meets $\gtrsim N_i$ different edges. The conclusion is that the arithmetically defined families of hyperbolic $3$-manifolds can be considered as topological analogues of expanders.

Congruence subgroups of arithmetic lattices in $\PSL_2(\C)$ and associated hyperbolic $3$-manifolds were actively studies by many people from various viewpoints. Even a brief account of related work would take us far beyond the scope of this introduction. Regardless of the large amount of research in this area and important breakthroughs, many fundamental questions about the families of arithmetically defined hyperbolic $3$-manifolds remain open. In particular, we know surprisingly little about the structure of their fundamental groups. The purpose of the present paper is to obtain some new results in this direction.

The main result of the paper is Theorem~\ref{thm1}, which applies to any closed hyperbolic $3$-manifold $M$ regardless of arithmeticity. The theorem says that for any $\epsilon>0$, if the systole $\sys_1(M)$ ($=$ the length of a shortest closed geodesic in $M$) is sufficiently large, then
$$\sysg(M) \ge \exp((\frac12-\epsilon)\sys_1(M)),$$
where $\sysg(M)$ denotes the minimal genus of a surface subgroup of $\pi_1(M)$ which we call the \emph{systolic genus of} $M$. The notion of systolic genus is related to the $2$-dimensional systole $\sys_2(M)$, which can be defined as the minimal area of a \pinj surface of genus $g > 0$ in the manifold $M$ (in terminology of \cite{Gromov:lectures} it is called the \emph{absolute $2$-systole}). In fact, one of the key ingredients of the proof of Theorem~\ref{thm1} is Thurston's inequality bounding $\sys_2(M)$ through the genus. Another main ingredient of the proof is Gromov's systolic inequality for surfaces of high genus, in which we use a numerical value of the constant provided by the work of Katz--Sabourau \cite{KS}. The application of these ingredients is supported by fundamental results from the theory of minimal surfaces of Schoen--Yau \cite{SY} and Sacks--Uhlenbeck \cite{SU}. After proving Theorem~\ref{thm1} and discussing some of its immediate corollaries in Section~\ref{sec:genus}, we proceed to its applications to the congruence covers.

Let us make a small pause here and recall a recent preprint by DeBlois \cite{DB}, which was one of the main motivations for the present article. Generalizing a result of Lackenby \cite{L}, DeBlois showed that the rank of the fundamental group of cyclic covers grows linearly with the degree if and only if the the covers satisfy a certain homological condition. This result supports the conjecture saying that given a sequence of finite covers $M_i\to M$ of a $3$-manifold $M$, the types of growth of the Heegard genus $\hg(M_i)$ and the rank of the fundamental group $\rk(M_i)$ should be the same. For the congruence covers it was proved independently by Lackenby \cite{L} and Gromov \cite{Gromov:GAFA} that the Heegard genus grows linearly with the degree, hence we may expect the linear growth of the minimal number of generators of the fundamental groups of the covers. In his proof DeBlois uses the fact that under certain conditions the covering manifolds in the sequence of cyclic covers all contain an incompressible surface of a fixed genus. These surfaces are pivotal to the argument in \cite{DB}, and the natural question arising here is that perhaps one can detect similar surfaces in the congruence covers. The results of Section~\ref{sec:congruence} show that this is impossible: the minimal genus of \pinj surfaces in the congruence covers $M_i \to M$ grows at least as $\vol(M_i)^{1/3}$ (see Proposition~\ref{prop1}). This result is proved using Theorem~\ref{thm1} and the lower bound for the growth of systole along the congruence covers of an arithmetic $3$-manifold from \cite{KSV}. The constant $\frac13$ in the proposition (as well as $\frac12$ in the theorem) is probably not optimal. The second proposition in Section~\ref{sec:congruence} shows that in general this constant cannot be improved above $\frac12$, and hence the optimal constant in Theorem~\ref{thm1} lies in $[\frac12, \frac34]$. This proposition is proved using totally geodesic immersions and the strong approximation theorem.

The conjecture about the growth rate of the rank of the fundamental group of congruence covers is still open, but in Section~\ref{sec:free} we will prove a somewhat stronger property with a weaker exponent. Recall that a group $\Gamma$ is called \emph{$k$-free} if any subgroup of $\Gamma$ generated by $k$ elements is free. We denote the maximal $k$ for which $\Gamma$ is $k$-free by $\Nfr(\Gamma)$. In \cite[Section~5.3.A]{Gromov:hyp groups}, Gromov stated that $\Nfr(\Gamma)$ is bounded below by an exponential function of the systole (or injectivity radius) of the associated quotient space $M$. The details of the proof were not given in \cite{Gromov:hyp groups}, they can be found in a later paper \cite[Section~2.4]{Gromov:GAFA} where it is pointed out that the argument gives only a bound of the form $\epsilon r/\log(r)$, $r=\sys_1(M)$. Two other proofs of the growth of $\Nfr(\Gamma)$ when $\sys_1(M) \to \infty$ appear in \cite{Arzh} and \cite{KW}, but the quantitative bounds for $\Nfr(\Gamma)$ which can be deduced from these papers are weaker than the one above. Although the difference between sub-linear and exponential growth is very large, in \cite[p.~763]{Gromov:GAFA} Gromov conjectured that the true bound should be exponential. He pointed out that this is not known even for the fundamental groups of hyperbolic $3$-manifolds. Using the methods of this paper in Section~\ref{sec:free} we show that under some mild arithmetic assumptions, for a sequence of congruence covers $M_i \to M$ of a compact arithmetic hyperbolic $3$-manifold $M$ we have
$$\log \Nfr(\pi_1(M_i)) \gtrsim \frac13\sys_1(M_i),\text{ as } i\to\infty.$$
The precise statements of the results are given in Theorem~\ref{thm2} and Corollary~\ref{cor:gromov}. This resolves a special case of the conjecture about the growth of $\Nfr$. It also provides the best currently known lower bound for $\rk(M_i)$.

In Section~\ref{sec:questions} we discuss higher dimensional generalizations of the results and some other related questions.

\medskip

\noindent\textsc{Notation.} Throughout the paper, the relation $f(x) \gtrsim g(x)$ for two positive functions $f(x)$ and $g(x)$ means that for any $\epsilon > 0$ there exists $x_0 = x_0(\epsilon)$ such that for all $x\ge x_0$ we have $f(x) \ge (1-\epsilon)g(x)$. We write $f(x) \sim g(x)$ if $f(x) \gtrsim g(x)$ and $g(x) \gtrsim f(x)$.

\medskip

\noindent\textbf{Acknowledgments.} I want to thank Nicolas Bergeron and Mikhail Gromov for correspondence, and Harold Rosenberg for helpful discussions. I also want to thank the referee for valuable comments and corrections.

\section{Systolic genus} \label{sec:genus}

Let $M$ be an $n$-dimensional Riemannian manifold and $S_g$ denote a Riemann surface of positive genus. We define the \emph{systolic genus} of $M$ by
$$\sysg(M) = \min\{g \mid \text{the fundamental group } \pi_1(M) \text{ contains } \pi_1(S_g)\},$$
and the \emph{homological systolic genus}
$$\sysg^h(M) = \min\{g \mid \text{there exists } \sigma\in H_2(M, \Z)\setminus\{0\} \text{ represented by } S_g\}.$$
Here we assume by convention that $\min\{\emptyset\} = \infty$.

It is clear from the definitions that $\sysg(M) \le \sysg^h(M)$. By the work of Kahn--Markovic \cite{KahM}, we know that any hyperbolic $3$-manifold contains a $\pi_1$-injective surface, therefore we have $\sysg(M^3) < \infty$ for any such manifold $M^3$. On the other hand we may have $H_2(M^3, \Z) = 0$, which would imply $\sysg^h(M^3) = \infty$. We can interpret $\sysg^h(M^3)$ as the minimal value of the Thurston norm \cite{T2} on $H_2(M^3, \Z)$ (see Corollary~\ref{cor:ThNorm} below).

The notion of the systolic genus is related to the $2$-dimensional systole $\sys_2(M)$, which can be defined as the minimal area of a \pinj surface in $M$ of genus $g > 0$ with respect to the induced metric. We refer to \cite{Gromov:lectures} or \cite{Katz:book} for more information about systoles. One of the corollaries of the considerations below is that for closed hyperbolic $3$-manifolds there is an equivalence between the systolic genus and $2$-dimensional systole (see Section~\ref{sec:questions2} for the details and references to the related work).

\begin{theorem}\label{thm1}
Let $M$ be a closed hyperbolic $3$-manifold. For any $\epsilon > 0$, assuming that the systole $\sys_1(M)$ is sufficiently large, we have
$$\sysg(M) \ge e^{(\frac12-\epsilon)\sys_1(M)}.$$
In particular, given a sequence of closed hyperbolic $3$-manifolds $M$ with $\sys_1(M) \to \infty$, we have
$$\log \sysg(M) \gtrsim \frac12\sys_1(M).$$
\end{theorem}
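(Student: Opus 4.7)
The plan is to take a minimal genus $\pi_1$-injective surface $\Sigma$ realizing $g = \sysg(M)$ in $M$ and extract a geometric inequality between $g$ and $\sys_1(M)$ by replacing $\Sigma$ by a minimal representative. If no such surface exists the statement is vacuous (with the convention $\sysg(M) = \infty$), so we assume $g < \infty$.

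First I would appeal to the Sacks--Uhlenbeck and Schoen--Yau theory of minimal surfaces to homotope the incompressible immersion $f: \Sigma_g \to M$ to a least-area (branched) minimal immersion $\Sigma \to M$ of the same genus in the same homotopy class. The key outcome is that with the induced metric, the intrinsic Gauss curvature of $\Sigma$ satisfies $K_\Sigma \le -1$ pointwise, because $M$ has sectional curvature $-1$ and the minimality of $\Sigma$ makes the extrinsic contribution non-positive. Applying Gauss--Bonnet then yields Thurston's inequality
\[
\area(\Sigma) \;\le\; 2\pi\,|\chi(\Sigma)|\;=\;4\pi(g-1).
\]

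Next I would control the systole of the induced metric on $\Sigma$ from below. Since $f$ is $\pi_1$-injective, every essential closed curve on $\Sigma$ remains homotopically non-trivial in $M$, and its length with respect to the induced metric dominates its length as a curve in $M$; hence
\[
\sys_1(\Sigma,\text{induced}) \;\ge\; \sys_1(M).
\]
Now I plug these two facts into Gromov's systolic inequality for surfaces of high genus in the quantitative form of Katz--Sabourau: for any $\epsilon > 0$ and $g$ sufficiently large,
\[
\sys_1(\Sigma)^2 \;\le\; \bigl(\tfrac{1}{\pi}+\epsilon\bigr)\,\frac{(\log g)^2}{g-1}\,\area(\Sigma).
\]
Combining with the area bound gives $\sys_1(M)^2 \le 4(1+\epsilon')(\log g)^2$, i.e.\ $\log g \ge (\tfrac12 - \epsilon'')\sys_1(M)$, which is exactly the claim after exponentiation. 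The second assertion of the theorem follows by letting $\epsilon \to 0$.

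The main technical obstacle, and the reason the hypothesis "$\sys_1(M)$ sufficiently large" is needed, is that both Gromov's systolic inequality and its Katz--Sabourau sharpening are asymptotic statements, valid only when $g$ is large; forcing $\sys_1(M)$ to be large ensures $g = \sysg(M)$ is large, at which point the inequality kicks in with a constant arbitrarily close to $1/\pi$. A secondary subtlety is the application of Sacks--Uhlenbeck/Schoen--Yau to a (possibly) non-embedded $\pi_1$-injective surface: one must either pass to a cover where $\Sigma$ embeds or argue that the branched minimal immersion produced still has genus $g$ and still supports the Gauss--Bonnet computation, and this bookkeeping (rather than any deeper analysis) is where the cleanest version of the argument is slightly delicate.
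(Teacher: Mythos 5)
Your approach is the paper's approach, and most of the steps are right, but there is one genuine gap exactly where you flag ``the main technical obstacle.'' You write that the hypothesis ``$\sys_1(M)$ sufficiently large'' ensures $g = \sysg(M)$ is large, and then apply Gromov's high-genus systolic inequality. But you never \emph{prove} that large $\sys_1(M)$ forces large $g$. This is not automatic: the high-genus inequality only kicks in after you already know $g$ is large, so you cannot use it to establish that $g$ is large in the first place. Without an independent argument here, the proof is circular.

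The paper closes this gap with a bootstrap: apply Besicovitch's inequality $\sys_1(S_g,\rho)^2 \le 2\,\area(S_g,\rho)$, which is a non-asymptotic estimate valid for every $g\ge 1$. Combined with Thurston's bound $\area(S_g,\rho) \le 4\pi(g-1)$, this gives $\sys_1(M)^2 \le \sys_1(S_g,\rho)^2 \le 8\pi(g-1)$, i.e.\ a (weaker, quadratic) lower bound on $g$ in terms of $\sys_1(M)$. That is enough to guarantee $g\to\infty$, and only then is the asymptotic Katz--Sabourau refinement legitimately applicable, yielding the exponential bound. You need to insert some such absolute systolic inequality to make the argument go through; Besicovitch is the natural choice.

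Two secondary remarks. First, the worry about branched points is resolved simply: for targets of dimension $3$, the Schoen--Yau / Sacks--Uhlenbeck minimizer is an honest immersion without branch points (the paper notes this explicitly), so you need not pass to a cover nor separately treat branched immersions. Passing to a cover would in fact be a wrong move here, since it would change the genus of the surface and embeddedness is irrelevant to the Gauss--Bonnet computation. Second, you also need to observe that $g=1$ cannot occur: $M$ is closed hyperbolic, so $\pi_1(M)$ contains no $\Z^2$; this lets you take $g\ge 2$ and use $\area \le 4\pi(g-1)$ meaningfully.
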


\begin{proof} Assume $\sysg(M) = g$ and so $\pi_1(M)$ has a subgroup isomorphic to $\pi_1(S_g)$, $g>0$. Then there exists a continuous map $f : S_g\to M$, which is $\pi_1$-injective and whose image is a (singular) surface of genus $g$ immersed into $M$. This map can be obtained by a general constructibility argument as e.g.~in \cite[Proof of Theorem~5.2]{SY}. Now by a theorem of Schoen--Yau \cite{SY} or Sacks--Uhlenbeck \cite{SU}, there exists a minimal immersion $h: S_g\to M$ such that $h_* = f_*$ on $\pi_1(S_g)$ and the induced area of $h$ is the least among all maps with the same action on $\pi_1(S_g)$. Note that the conclusion about $h$ being an immersion without branched points requires the assumption that the manifold $M$ is $3$-dimensional.

The pullback by $h$ defines a Riemannian metric $\rho$ on $S_g$ with
\begin{equation}\label{eq0}
\sys_1(S_g, \rho) \ge \sys_1(M).
\end{equation}

We now recall an important high genus systolic inequality due to Gromov \cite{Gromov:filling} (see also \cite[p.~88]{Katz:book}):
\begin{equation}\label{eq1}
\frac{\sys_1(S_g, \rho)}{\log g} \lesssim C\left(\frac{\area(S_g, \rho)}{g}\right)^\frac12, \text{ as } g\to\infty.
\end{equation}
The best known asymptotic value of the constant $C = \frac{1}{\sqrt{\pi}}$ was obtained by Katz--Sabourau in \cite{KS}.

On the face of it, inequality \eqref{eq1} provides precisely what we need for the proof of the theorem but there is one delicate issue here: the inequality holds only for surfaces of large genus  which is not part of our assumptions. Indeed, it is rather our goal to show that the genus $g$ has to be large together with a quantitative version of this statement. We will return to inequality \eqref{eq1} later after resolving this issue.

The area of an immersed \pinj minimal surface $S_g \subset M$ of genus $g > 1$ satisfies Thurston's inequality
\begin{equation}\label{eq2}
\area(S_g,\rho) \le 4\pi(g-1).
\end{equation}
This inequality is obtained by applying the Gauss equation to the minimal surfaces in hyperbolic $3$-manifolds and then using the Gauss--Bonnet theorem (see e.g. \cite[Lemma~6]{Hass}). The simplicity of the argument in no way does reduce the importance of this remarkable inequality. We will discuss its generalizations and some related problems in Section~\ref{sec:questions}.

We now can show that $\sys_1(M)\to\infty$ implies $\sysg(M)\to\infty$ which would enable us to use \eqref{eq1}. In order to do so we recall a well-known Besicovitch's systolic inequality:
\begin{equation}\label{eq3}
\sys_1(S_g, \rho)^2 \le 2\,\area(S_g, \rho).
\end{equation}
The proof of this inequality is given in \cite[Section~4.5]{Gromov:Metric Structures}. Inequality \eqref{eq3} is asymptotically weaker than \eqref{eq1} but the advantage is that it holds for any $g \ge 1$. By the assumption, $M$ is a closed hyperbolic $3$-manifold therefore it does not contain \pinj tori and $g=1$ does not arise. Combining \eqref{eq3} with \eqref{eq2}, we obtain
$$\sys_1(S_g, \rho)^2 \le 2\,\area(S_g, \rho) \le 8\pi(g-1).$$
Hence by \eqref{eq0}, $\sys_1(M)\to\infty$ implies $g \to \infty$.

To finish the proof we can use inequality \eqref{eq1} together with \eqref{eq2}. By \eqref{eq1} and the previous argument, given $\epsilon_0 > 0$, there exists $s = s(\epsilon_0)$ such that if $\sys_1(M) > s$, then
\begin{equation*}
\frac{\sys_1(S_g, \rho)}{\log g} \le \left(\frac{\area(S_g, \rho)}{(\pi-\epsilon_0)g}\right)^\frac12.
\end{equation*}
Hence we have
\begin{align*}
\log g  &\ge \sys_1(S_g, \rho) \left(\frac{(\pi-\epsilon_0)g}{\area(S_g, \rho)}\right)^\frac12 \\
        &\ge \sys_1(S_g, \rho) \left(\frac{(\pi-\epsilon_0) g}{4\pi(g-1)}\right)^\frac12 \\
        &\ge \left(\frac{1}{2}-\epsilon\right) \sys_1(S_g, \rho) .
\end{align*}
By inequality \eqref{eq0}, this gives
\begin{equation*}
g \ge e^{(\frac12-\epsilon)\sys_1(M)}.
\end{equation*}
The asymptotic inequality for $\log\sysg(M)$ follows immediately.
\end{proof}

We remark that Theorem~\ref{thm1} also provides a lower bound for the homological systolic genus $\sysg^h(M)$, which can be viewed as a lower bound for the Thurston norm \cite{T2}. Let us state the result for the Thurston norm $\norm{.}_T$  as a corollary.
\begin{cor}\label{cor:ThNorm}
Under the assumptions of Theorem~\ref{thm1}, for every $\sigma\in H_2(M, \Z)\setminus\{0\}$ we have
$$\norm{\sigma}_T \ge 2e^{(\frac12-\epsilon)\sys_1(M)} - 2.$$
\end{cor}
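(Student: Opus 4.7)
The plan is to reduce the statement to Theorem~\ref{thm1} by analyzing the structure of a Thurston-norm-minimizing representative of $\sigma$. Recall that Thurston defined
\[
\norm{\sigma}_T = \min_S \sum_i \max\bigl(0, -\chi(S_i)\bigr),
\]
where $S = \bigsqcup_i S_i$ ranges over embedded oriented surfaces representing $\sigma$. I would fix a minimizing representative $S$ and simplify it without increasing $\norm{\sigma}_T$.

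First I would compress each component along any essential compressing disk; since compression raises $\chi$ by $2$, it strictly decreases $\sum_i(-\chi(S_i))$, so after finitely many steps every component of $S$ is incompressible. Next I would discard sphere components: they bound balls by irreducibility of $M$ (which follows from hyperbolicity via the sphere theorem), contribute $0$ to $\sigma$, and contribute $0$ to the norm. Finally, since $M$ is closed hyperbolic and therefore atoroidal, $M$ contains no incompressible torus, so no torus component survives the incompressibility reduction (and tori contribute $0$ to the norm anyway). After these reductions every component $S_i$ is an incompressible closed embedded surface of genus $g_i \ge 2$.

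Each such $S_i$ is $\pi_1$-injective as an incompressible embedded surface in a $3$-manifold, so $\pi_1(M)$ contains a surface subgroup of genus $g_i$, whence $g_i \ge \sysg(M)$. Since embedded spheres are null-homologous by irreducibility, and embedded tori in $M$ are also null-homologous (a compressing disk realizes a cobordism to a sphere, which bounds a ball), the assumption $\sigma \ne 0$ forces at least one component of $S$ to survive the reduction. Summing,
\[
\norm{\sigma}_T = \sum_i (2g_i - 2) \ge 2\sysg(M) - 2,
\]
and inserting the bound from Theorem~\ref{thm1} yields the claim.

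The main obstacle is the reduction step, turning a norm-minimizing $S$ into a disjoint union of incompressible surfaces of genus at least $2$ without losing homology. This is standard in \cite{T2}, but it relies crucially on the hyperbolic hypotheses (irreducibility and atoroidality) to kill the sphere and torus contributions; once this reduction is in place, the proof is a one-line application of Theorem~\ref{thm1}.
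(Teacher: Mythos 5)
Your proof is correct and follows essentially the same route as the paper. The paper is terser: it quotes the fact that for a closed $3$-manifold the Thurston seminorm is a genuine norm, which already encodes that a norm-minimizing representative has a component of genus $>1$ giving a surface subgroup; you unpack that by carrying out the standard compression/discard argument (irreducibility kills spheres, atoroidality kills tori) to produce an incompressible, hence $\pi_1$-injective, component of genus at least $\sysg(M)$, and then apply Theorem~\ref{thm1}.
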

\begin{proof}
It is easy to see that any non-trivial class $\sigma\in H_2(M, \Z)$ can be represented by a smoothly embedded surface $S$. The surface $S$ may be disconnected in which case we denote its connected components by $S_i$. The Thurston norm of $\sigma$ is defined by
$$\norm{\sigma}_T = \min\{\chi_-(S) \mid [S] = \sigma\}, \text{ where } \chi_-(S) = \sum_{g(S_i)>0} (2g(S_i)-2).$$
The result that for a \emph{closed} $3$-manifold $M$, $\norm{.}_T$ extends to a norm on $H_2(M, \R)$ is based on the fact that at least one of the surfaces $S_i$ in the decomposition above has $g>1$. It follows that $\pi_1(M)$ contains a subgroup isomorphic to $\pi_1(S_g)$ and so we can apply the theorem.
\end{proof}

A different class of lower bounds for the Thurston norm can be obtained using the Seiberg--Witten theory. We refer to \cite{Kr} for a nice survey of related results and methods. The bounds which come out this way are in a sense more precise, as, for instance, they usually depend (as they should) on the homology class $\sigma$. The main feature of our estimate is that it allows us to capture some global geometric information about the manifold $M$ in a single inequality that applies simultaneously to all the non-trivial classes.

\begin{cor}\label{cor:sys2}
Under the assumptions of Theorem~\ref{thm1}, we have
\begin{align*}
\log \sys_2(M)   &\gtrsim \frac12\sys_1(M); \text{ and}\\
\log \sys^h_2(M) &\gtrsim \frac12\sys_1(M), \text{ as \ }\sys_1(M)\to\infty.
\end{align*}
\end{cor}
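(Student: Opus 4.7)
The plan is to read the chain of inequalities used in the proof of Theorem~\ref{thm1} ``backwards'': rather than extracting a lower bound on the genus from a fixed systole, I would extract a lower bound on the area of any $\pi_1$-injective surface (resp.\ of any surface in a non-trivial homology class), using Theorem~\ref{thm1} (resp.\ Corollary~\ref{cor:ThNorm}) as the input lower bound on the genus.

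For the first estimate, fix $\epsilon>0$ and $\epsilon_0, \epsilon' < \epsilon$, and assume $\sys_1(M)$ is large enough that Theorem~\ref{thm1} applies with parameter $\epsilon'$. I would take a $\pi_1$-injective minimal immersion $h\colon S_g\to M$ (Schoen--Yau, Sacks--Uhlenbeck) whose pulled-back area is within $o(1)$ of $\sys_2(M)$; by definition of $\sysg$ its genus satisfies $g\ge \sysg(M)\ge e^{(\frac12-\epsilon')\sys_1(M)}$, so in particular $g\to\infty$ as $\sys_1(M)\to\infty$ and Gromov's high-genus inequality \eqref{eq1} becomes available on the pulled-back metric $\rho$. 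Solving \eqref{eq1} for the area and using $\sys_1(S_g,\rho)\ge \sys_1(M)$ gives
$$\sys_2(M) \;\ge\; (\pi-\epsilon_0)\,\frac{g\,\sys_1(M)^2}{(\log g)^2}.$$
Since $x\mapsto x/(\log x)^2$ is increasing for $x$ large, substituting $g\ge e^{(\frac12-\epsilon')\sys_1(M)}$ makes the right-hand side at least a positive constant multiple of $e^{(\frac12-\epsilon')\sys_1(M)}$. Taking logarithms and letting $\epsilon',\epsilon_0\to 0$ yields $\log \sys_2(M)\gtrsim \frac12 \sys_1(M)$.

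For the homological version I would run the same argument with an area-minimizing representative of a chosen nonzero class $\sigma\in H_2(M,\Z)$. Such a minimizer exists as a smooth (possibly disconnected) stable minimal surface by geometric measure theory; each component of positive genus is incompressible and therefore $\pi_1$-injective, so it meets the hypotheses of Gromov's inequality \eqref{eq1}. The lower bound on genus needed to feed into the monotone function $x/(\log x)^2$ now comes from Corollary~\ref{cor:ThNorm}: it forces $\chi_-$ of the minimizer, and in particular at least one positive-genus component, to grow like $e^{(\frac12-\epsilon)\sys_1(M)}$. Summing the resulting lower bounds on the areas of the components produces $\sys_2^h(M)\gtrsim e^{(\frac12-\epsilon)\sys_1(M)}$, giving the second asymptotic estimate.

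The main thing to watch is the quantifier over genus: Gromov's inequality is only valid once $g$ is large, and the minimizers produced above have a priori unknown genus. The resolution in both cases is the same as in the proof of Theorem~\ref{thm1}, namely that $\sys_1(M)\to\infty$ forces $g\to\infty$ via the combined Besicovitch--Thurston estimate $\sys_1(M)^2\le 8\pi(g-1)$. Beyond that, the calculation is essentially the rearrangement above, with the monotonicity of $x/(\log x)^2$ doing all the work of turning a lower bound on the (possibly unknown) $g$ into the advertised exponential lower bound on the area.
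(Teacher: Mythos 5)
Your proposal is correct and follows essentially the same route as the paper: rearrange Gromov's high-genus systolic inequality \eqref{eq1} to get a lower bound on the pulled-back area in terms of the genus, then feed in the genus lower bound $g\ge\sysg(M)\gtrsim e^{\frac12\sys_1(M)}$ from Theorem~\ref{thm1} (and $\sysg^h\ge\sysg$ for the homological version). The paper takes logarithms of \eqref{eq1} immediately to record $\log\area(S_g,\rho)\gtrsim\log g$ rather than substituting via monotonicity of $x/(\log x)^2$, and it states the homological case more tersely than your GMT/incompressibility discussion, but the argument is the same.
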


\begin{proof} By \eqref{eq1}, we have
\begin{equation*}\
\area(S_g,\rho) \gtrsim \frac{g}{(C\log g)^2}\,\sys_1(S_g, \rho)^2, \text{ as \ }g\to\infty;
\end{equation*}
which implies
\begin{equation*}
\log \area(S_g,\rho) \gtrsim \log g.
\end{equation*}
The corollary now follows from the theorem and the fact that $\sysg^h(M) \ge \sysg(M)$.
\end{proof}

\section{Congruence covers} \label{sec:congruence}

The main application of Theorem \ref{thm1} is to the sequences of congruence covers of a compact arithmetic hyperbolic $3$-manifold. We first recall some definitions (see \cite{MR} for further details).

By an \emph{arithmetic hyperbolic $3$-manifold} we mean a quotient space $\Hy^3/\Gamma$, where $\Hy^3$ is the hyperbolic $3$-space and $\Gamma$ is a torsion-free arithmetic Kleinian group. The arithmetic Kleinian groups are defined as follows. Let $k$ be a number field having exactly one complex place, $\cO$ the ring of integers of $k$, and $D$ a quaternion algebra over $k$ which ramifies at all real places of $k$. Let $\rho: D \to \mathrm{M}(2,\C)$ be an embedding, $\cQ$ an $\cO$-order of $D$, and $\cQ^1$ the elements of norm one in $\cQ$. Then $\pr(\cQ^1) < \PSL(2,\C)$ is a finite covolume Kleinian group, which is cocompact if and only if $D$ is not isomorphic to $\mathrm{M}(2, \Q[\sqrt{-d}])$, where $d$ is a square free positive integer. An \emph{arithmetic Kleinian group} $\Gamma$ is a subgroup of $\PSL(2,\C)$ commensurable with a group of the type $\pr(\cQ^1)$.

We now define congruence subgroups of an arithmetic Kleinin group $\Gamma$ which give rise to the congruence covers of the manifold $M = \Hy^3/\Gamma$. We can assume without loss of generality that $\Gamma = \pr(\cQ^1)$ for some $\cQ$ as above. Let $I$ be an ideal of the ring of integers $\cO$. It defines an ideal $I\cQ$ of the $\cO$-order $\cQ$, which yields a finite quotient ring $\cQ/I\cQ$. The principal congruence subgroup of $\cQ^1$ with respect to an ideal $I$ in $\cO$ is by definition the kernel of the homomorphism $\cQ^1 \to (\cQ/I\cQ)^\times$ induced by the natural projection $\cQ \to \cQ/I\cQ$. We denote it by $\cQ^1(I)$. The projection $\Gamma(I) = \pr(\cQ^1(I))$ is called a \emph{(principal) congruence subgroup} of  $\PSL(2,\C)$. It is a finite index subgroup of the group $\Gamma$, to which we associate a \emph{congruence cover} $M_I = \Hy^3/\Gamma(I) \to M$.

\begin{prop}\label{prop1}
A sequence of congruence covers  $M_I \to M$  of a compact arithmetic hyperbolic $3$-manifold $M$ satisfies
$$\log \sysg(M_I) \gtrsim \frac13 \log \vol(M_I), \text{ as } \mathrm{Norm}(I) \to \infty.$$
\end{prop}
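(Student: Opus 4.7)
The plan is a direct chaining of Theorem~\ref{thm1} with a known lower bound on the systole of arithmetic congruence covers. The relevant input from \cite{KSV} is that for a compact arithmetic hyperbolic $3$-manifold $M$ and its principal congruence covers $M_I$, one has
$$\sys_1(M_I) \gtrsim \tfrac{2}{3}\log \vol(M_I) \quad \text{as } \mathrm{Norm}(I) \to \infty.$$
This in particular forces $\sys_1(M_I)\to\infty$ along the sequence, so the hypothesis of Theorem~\ref{thm1} is met for all sufficiently large $\mathrm{Norm}(I)$.

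With both ingredients in hand, the proof is a one-line composition:
$$\log\sysg(M_I) \gtrsim \tfrac{1}{2}\sys_1(M_I) \gtrsim \tfrac{1}{2}\cdot\tfrac{2}{3}\log\vol(M_I) = \tfrac{1}{3}\log\vol(M_I).$$
The asymptotic bookkeeping is harmless: each $\gtrsim$ absorbs a factor of $(1-\epsilon)$, so their composition absorbs $(1-\epsilon)^2 \ge 1-2\epsilon$, which can in turn be rewritten as $(1-\epsilon')$ for any prescribed $\epsilon'>0$ by choosing $\epsilon$ small enough and $\mathrm{Norm}(I)$ correspondingly large.

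There is no serious obstacle here; the substantive analytic input is already inside Theorem~\ref{thm1}, and the number-theoretic input is the KSV bound. The only technical point worth flagging is consistency of conventions: one needs to make sure the form of the KSV estimate is compatible with the $\pr(\cQ^1)$-setup used to define $\Gamma(I)$ earlier in this section, so that the bound $\sys_1(M_I) \gtrsim \tfrac{2}{3}\log\vol(M_I)$ really applies term-by-term along the sequence under consideration. This is a matter of matching definitions rather than any new argument, so the role of Proposition~\ref{prop1} is essentially to translate Theorem~\ref{thm1} from a statement about $\sys_1$ into one about $\vol$ via the congruence systole bound.
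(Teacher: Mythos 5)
Your proposal is correct and is essentially identical to the paper's proof: both invoke the Katz--Schaps--Vishne bound $\sys_1(M_I) \gtrsim \frac{2}{3}\log\vol(M_I)$ and compose it with Theorem~\ref{thm1}. The paper additionally notes that the KSV result is stated with the simplicial volume $\norm{M_I}$ rather than the hyperbolic volume, and that these differ only by the constant factor $v_3$, which disappears under the $\gtrsim$ asymptotics; this is precisely the ``matching of conventions'' you flagged.
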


\begin{proof}
By \cite[Theorem 1.8]{KSV}, we have
$$
\sys_1(M_I) \ge \frac23\log\vol(M_I) - c,\quad c = c(M).
$$
(In \cite{KSV} the inequality is stated with the simplicial volume $\norm{M_I}$ instead of the hyperbolic volume of $M_I$, which is equivalent since for a closed hyperbolic $3$-manifold $M$ we have $\norm{M} = \frac{\vol(M)}{v_3}$, where $v_3$ is the volume of a regular ideal simplex in $\Hy^3$.)

Hence
$$
\sys_1(M_I) \gtrsim \frac23\log\vol(M_I), \text{ when } \mathrm{Norm}(I) \to \infty.
$$
Using this inequality in Theorem \ref{thm1} gives
$$
\log \sysg(M_I) \gtrsim \frac12\sys_1(M_I) \gtrsim \frac12\cdot\frac23\,\log\vol(M_I).
$$
\end{proof}

Given a lower bound for the growth of $\sysg(M_I)$ it is natural to ask how far is it from the actual growth of the systolic genus. In this context it is worthwhile to recall the result of Lackenby \cite{L} and Gromov \cite{Gromov:GAFA} saying that the Heegard genus $\hg(M_I)$ grows proportionally to $\vol(M_I)$. There is no direct connection between the systolic genus and the Heegard genus of $M$, but still we may try to seek for a linear or even stronger lower bound for the former. The result of the next proposition shows that in many cases the growth rate of $\sysg(M_I)$ is actually sub-linear and, in particular, much slower than the growth rate of the Heegard genus.

\begin{prop}\label{prop2}
Let $M$ be a compact arithmetic hyperbolic $3$-manifold which contains an immersed totally geodesic hypersurface. Then $M$ has a sequence of congruence covers $M_I\to M$ such that
$$\sysg(M_I) \lesssim \vol(M_I)^{1/2}, \text{ as } \mathrm{Norm}(I) \to \infty.$$
\end{prop}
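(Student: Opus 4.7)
The plan is to construct, for each congruence cover $M_I$ in a suitably chosen sequence, a $\pi_1$-injective surface of genus $\lesssim \vol(M_I)^{1/2}$. The surface will be a finite cover of the given totally geodesic surface $\Sigma\hookrightarrow M$ lifted to $M_I$; strong approximation will force the degree of this cover of $\Sigma$ to grow like the \emph{square root} of the degree of $M_I\to M$.

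First, I would fix the arithmetic setup. Let $\Delta<\Gamma$ be the subgroup stabilizing a hyperbolic plane covering $\Sigma$. By the arithmetic classification of totally geodesic surfaces in compact arithmetic hyperbolic $3$-manifolds, $\Delta$ is commensurable with $\pr(\mathcal{R}^1)$ for some $\cOl$-order $\mathcal{R}$ in a quaternion algebra $A$ defined over a totally real subfield $\ell\subset k$ with $[k:\ell]=2$, where $D\cong A\otimes_\ell k$. Passing to a commensurable setup, I may arrange $\Gamma=\pr(\cQ^1)$ with $\mathcal{R}\subset\cQ$, so that $\Delta\subset\Gamma$ literally.

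Next, I would choose the ideals carefully. Select primes $\mathfrak{q}\subset\cOl$ of residue norm $q\to\infty$ that are inert in $k/\ell$ and at which both $A$ and $D$ split, and set $I=\mathfrak{q}\cO$. Since $\cO/I\cong\mathbb{F}_{q^2}$ and $\cOl/\mathfrak{q}\cong\mathbb{F}_q$, strong approximation applied separately to $\cQ^1$ and $\mathcal{R}^1$ yields surjections $\Gamma\twoheadrightarrow\PSL_2(\mathbb{F}_{q^2})$ and $\Delta\twoheadrightarrow\PSL_2(\mathbb{F}_q)$, and the $\Delta$-reduction factors through the natural inclusion $\PSL_2(\mathbb{F}_q)\hookrightarrow\PSL_2(\mathbb{F}_{q^2})$. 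Consequently $\Delta\cap\Gamma(I)$ is precisely the kernel of $\Delta\to\PSL_2(\mathbb{F}_q)$, and
$$[\Gamma:\Gamma(I)]\sim q^6, \qquad [\Delta:\Delta\cap\Gamma(I)]\sim q^3,$$
so $\vol(M_I)\sim q^6\vol(M)$.

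Finally, let $\Sigma_I\to\Sigma$ be the cover corresponding to $\Delta\cap\Gamma(I)$. The composition $\Sigma_I\to\Sigma\to M$ lifts to an immersion $\Sigma_I\to M_I$ inducing the inclusion $\Delta\cap\Gamma(I)\hookrightarrow\Gamma(I)=\pi_1(M_I)$ on fundamental groups, hence it is $\pi_1$-injective. By Riemann--Hurwitz, $g(\Sigma_I)\sim q^3\cdot g(\Sigma)\sim q^3$, so
$$\sysg(M_I)\le g(\Sigma_I)\sim q^3\sim\vol(M_I)^{1/2}.$$
I expect the main technical obstacle to be the strong approximation step and the compatibility of the two reductions modulo $I$; once those are settled, the remainder is straightforward bookkeeping with indices and Euler characteristics.
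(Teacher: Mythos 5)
Your argument is correct and is, at the conceptual level, the same as the paper's: lift the totally geodesic surface $\Sigma$ to congruence covers and use strong approximation to show that the index of the surface subgroup grows like the square root of the index of the $3$-manifold subgroup. The difference is purely in the arithmetic model. The paper appeals to \cite[Theorem~10.2.3]{MR} to pass to the quadratic form description, taking $\Gamma=\SO^\circ(f,\cOl)$ for a quaternary form $f$ over the totally real field $\ell$ and $\Gamma'=\SO^\circ(f',\cOl)$ for the ternary restriction; strong approximation then gives $[\Gamma:\Gamma(I)]\sim |\SO^\circ(f,\cOl/I)|\sim q^6$ and $[\Gamma':\Gamma'(I)]\sim |\SO^\circ(f',\cOl/I)|\sim q^3$ directly, with the compatibility of the two reductions being immediate from the inclusion $\SO^\circ(f')\hookrightarrow\SO^\circ(f)$ over $\ell$. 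You stay in the quaternion algebra framework of Section~3, realizing $\Delta$ via a Fuchsian subalgebra $A$ over $\ell\subset k$ with $D\cong A\otimes_\ell k$, and you obtain the same $q^6$ versus $q^3$ count from $|\PSL_2(\mathbb{F}_{q^2})|$ versus $|\PSL_2(\mathbb{F}_q)|$. Your route has the small extra burden of choosing inert primes and checking that the two reduction maps are compatible via $\mathcal{R}\subset\cQ$, both of which you handle correctly; one could also allow split primes $\mathfrak{q}\cO=\mathfrak{p}_1\mathfrak{p}_2$, since then $\PSL_2(\cO/\mathfrak{q}\cO)\cong\PSL_2(\mathbb{F}_q)\times\PSL_2(\mathbb{F}_q)$ still has order $\sim q^6$ while $\Delta$ reduces into the diagonal $\PSL_2(\mathbb{F}_q)$, so the inert restriction is a convenience rather than a necessity. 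The paper's quadratic form approach avoids these case distinctions at the cost of an extra change of model; both are valid and lead to the identical estimate $\sysg(M_I)\lesssim\vol(M_I)^{1/2}$.
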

\begin{proof}
The assumption that $M$ has an immersed totally geodesic surface implies that $M$ belongs to the special class of arithmetic $3$-manifolds which are defined by quadratic forms (see \cite[Theorem~10.2.3]{MR}). Without loss of generality, we can assume that the defining form $f$ is a diagonal quadratic form $-a_0x_0^2 + a_1x_1^2 + a_2x_2^2 + a_3x_3^2$, with $a_i> 0$ algebraic integers in a totally real field $\ell\subset\R$ such that for the Galois embeddings $\sigma\colon \ell\hookrightarrow\R$ which are different from the original embedding, $\sigma(a_0) < 0$ and $\sigma(a_i) > 0$, $i = 1,2,3$, and that $\Gamma = \pi_1(M)$ is the group of $\cOl$-units $\SO^\circ(f,\cOl)$. These assumptions may require passing to a finite sheet cover of $M$, which is consistent with the proposition. Let $f' = -a_0x_0^2 + a_1x_1^2 + a_2x_2^2$ be a $3$-dimensional quadratic form obtained by the restriction of $f$ to the first $3$ coordinates, and let $\Gamma'$ be the group of $\cOl$-units of $f'$. The natural inclusion $\SO^\circ(f') \hookrightarrow \SO^\circ(f)$ is defined over $\ell$, and we have $\Gamma' = \Gamma\cap\SO^\circ(f')$.

A well known corollary of the strong approximation theorem (cf.~\cite[Theorem~7.7.5]{MR}) implies that for almost all ideals $I \subset \cOl$, the reduction maps $\Gamma\to\SO^\circ(f,\cOl/I)$ and $\Gamma'\to\SO^\circ(f',\cOl/I)$ are surjective, which leads to the following commutative diagram:
$$
\begin{CD}
1 @>>> \Gamma(I) @>>> \Gamma @>>> \SO^\circ(f,\cOl/I) @>>> 1 \\
@. @AAA @AAA @AAA @.\\
1 @>>> \Gamma'(I) @>>> \Gamma' @>>> \SO^\circ(f',\cOl/I) @>>> 1 \\
\end{CD}
$$
Hence we have
\begin{align*}
 [\Gamma:\Gamma(I)] &= |\SO^\circ(f,\cOl/I)| \sim q^6;\\
 [\Gamma':\Gamma'(I)] &= |\SO^\circ(f',\cOl/I)| \sim q^3,
\end{align*}
where $q$ denotes the order of the quotient ring $\cOl/I$. It follows that the $\area(M'(I))$, where $M'(I)$ is the totally geodesic surface in $M(I)$ associated to $\Gamma'(I)$, grows as $\vol(M(I))^{1/2}$, and hence its genus is also $\sim \vol(M(I))^{1/2}$.
\end{proof}

\begin{cor}
For the sequences of congruence covers from Proposition~\ref{prop2} we have
 $$\frac{\hg(M_I)}{\sysg(M_I)} \gtrsim \vol(M_I)^{1/2}, \text{ as } \mathrm{Norm}(I) \to \infty.$$
\end{cor}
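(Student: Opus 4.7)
The plan is to combine the upper bound on the systolic genus established in Proposition~\ref{prop2} with the known linear lower bound on the Heegaard genus of the congruence covers. More precisely, Proposition~\ref{prop2} yields a sequence of congruence covers $M_I\to M$ with $\sysg(M_I) \lesssim \vol(M_I)^{1/2}$, because the group $\Gamma'(I)$ coming from the sub-quadratic form $f'$ cuts out an immersed totally geodesic surface whose area, and therefore genus, grows like $\vol(M_I)^{1/2}$.

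For the numerator, I would invoke the Lackenby--Gromov theorem, already recalled in the introduction: for a sequence of congruence covers of a compact arithmetic hyperbolic $3$-manifold, the Heegaard genus grows linearly with the covering degree, i.e.\ there is a positive constant $c = c(M)>0$ such that
\begin{equation*}
\hg(M_I) \ge c\cdot\vol(M_I)
\end{equation*}
for all large enough $\mathrm{Norm}(I)$. Since the congruence degree $[\Gamma:\Gamma(I)]$ and $\vol(M_I)/\vol(M)$ coincide, the formulation in terms of volume is equivalent to the one in terms of covering degree.

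Dividing the two estimates then gives
\begin{equation*}
\frac{\hg(M_I)}{\sysg(M_I)} \;\gtrsim\; \frac{c\cdot\vol(M_I)}{\vol(M_I)^{1/2}} \;=\; c\cdot\vol(M_I)^{1/2},
\end{equation*}
which is the desired asymptotic (the multiplicative constant $c$ is absorbed into the $\gtrsim$ relation in the same way it is absorbed in the statements of Proposition~\ref{prop1} and Proposition~\ref{prop2}).

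The proof is essentially a one-line combination, so there is no real obstacle; the only point to be careful about is that both ingredients produce compatible sequences. This is automatic because the Lackenby--Gromov linear lower bound applies to \emph{every} sequence of congruence covers with $\mathrm{Norm}(I)\to\infty$, and in particular to the specific sequence constructed in the proof of Proposition~\ref{prop2}.
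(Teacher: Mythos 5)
Your proof is correct and matches what the paper intends: the corollary is stated immediately after Proposition~\ref{prop2} without a separate proof, precisely because it is the one-line combination you describe, dividing the Lackenby--Gromov linear lower bound $\hg(M_I)\ge c\,\vol(M_I)$ (already recalled in the paragraph preceding Proposition~\ref{prop2}) by the upper bound $\sysg(M_I)\lesssim \vol(M_I)^{1/2}$ from Proposition~\ref{prop2}. One small caveat on your closing remark: in Proposition~\ref{prop1} the implied constant disappears because the statement is about $\log\sysg$, where an additive constant is negligible, whereas here (as in Proposition~\ref{prop2}) the multiplicative constant does not literally vanish under the paper's strict definition of $\gtrsim$; the paper is using $\gtrsim$ in the slightly looser ``up to a positive constant'' sense in these two places, and you are doing the same, so the argument is fine.
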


As a consequence of Proposition \ref{prop2}, we can make some conclusions about the \emph{optimal constants} in Proposition~\ref{prop1} and Theorem~\ref{thm1}. Indeed, the proposition implies that in general the constant in Proposition~\ref{prop1} can not be improved above $\frac12$, and hence the optimal exponent in Theorem~\ref{thm1} lies in $[\frac12, \frac34]$. The quantitative part of the proof of Theorem~\ref{thm1} is based on inequalities \eqref{eq1} and \eqref{eq2}. It is known that inequality \eqref{eq2} is sharp with the equality attained for the totally geodesic surfaces. Regarding \eqref{eq1}, the best known examples are provided by the congruence covers first constructed by Buser and Sarnak in \cite{BSa}. They give the value $C = \frac{2}{3\sqrt{\pi}}$. If it would turn out that this is the true asymptotic value for the constant, then it would push the exponent in Theorem~\ref{thm1} up to $\frac34$, which is the best possible by Proposition~\ref{prop2}.

\begin{rmk}\label{rem}
Similarly to Corollary \ref{cor:sys2}, we can apply inequalities \eqref{eq1} and \eqref{eq2} to restate the results of this section in terms of $\sys_2(M)$.
\end{rmk}

\section{Free subgroups} \label{sec:free}

Recall that a group $\Gamma$ is called \emph{$k$-free} if every subgroup of $\Gamma$ which can be generated by $k$ elements is a free group (of some rank $\le k$). We will denote the maximal $k$ for which a finitely generated group $\Gamma$ is $k$-free by $\Nfr(\Gamma)$ or $\Nfr(M)$ if $\pi_1(M) = \Gamma$. The fundamental group of a hyperbolic manifold $M$ is torsion-free, hence for such $M$ we have $\Nfr(M) \ge 1$.

The following theorem was proved by Baumslag and Shalen:

\begin{theorem}\cite{BSh} \label{thm_shalen}  
Let $M$ be an irreducible, closed orientable $3$-manifold, and let $k$ be a positive integer. Suppose that $\pi_1(M)$ has no subgroup isomorphic to
$\pi_1(S_g)$ for any $g$ with $0 < g < k$, and that $\beta_1(M) > k$. Then $\pi_1(M)$ is $k$-free.
\end{theorem}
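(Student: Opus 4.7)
The plan is to argue by contradiction: let $H\le\pi_1(M)$ be a subgroup generated by $k$ elements, assume $H$ is \emph{not} free, and aim to produce a \pinj closed surface of genus $g$ with $0<g<k$ immersed in $M$. First I would observe that $H$ must have infinite index in $\pi_1(M)$: if $[\pi_1(M):H]<\infty$, the corresponding finite cover $N$ has $\beta_1(N)\ge\beta_1(M)>k$ by the transfer map, while $H^{\mathrm{ab}}\otimes\mathbb{Q}$ has dimension at most $k$ since $H$ is $k$-generated, a contradiction.

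Thus the cover $N\to M$ with $\pi_1(N)=H$ is a non-compact, connected, orientable $3$-manifold, irreducible by the sphere theorem applied to $\pi_2(N)\cong\pi_2(M)=0$. Scott's compact core theorem produces a compact submanifold $C\subset N$ whose inclusion induces an isomorphism $\pi_1(C)\cong H$. Using irreducibility of $N$ to absorb sphere components of $\partial C$ (each bounding a $3$-ball) and standard tower-type manipulations to incompressify $\partial C$ while preserving $\pi_1(C)$, one arranges the components $\Sigma_1,\dots,\Sigma_m$ of $\partial C$ to be closed orientable incompressible surfaces of positive genera $g_1,\dots,g_m$. Since $H$ is not free, $C$ is not a handlebody, and $\partial C\ne\emptyset$ (otherwise $C=N$ would be closed, putting us back in the finite-index case), so $m\ge 1$. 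The half-lives-half-dies lemma for the compact orientable $3$-manifold $C$ then yields
$$\sum_{i=1}^m g_i \;=\; \tfrac12\dim_\mathbb{Q} H_1(\partial C;\mathbb{Q}) \;\le\; \dim_\mathbb{Q} H_1(C;\mathbb{Q}) \;\le\; k,$$
and incompressibility produces injections $\pi_1(\Sigma_i)\hookrightarrow H\hookrightarrow\pi_1(M)$.

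If some $g_i<k$, this $\Sigma_i$ immediately violates the no-small-surface hypothesis and we are finished. The only remaining case is $m=1$ and $g_1=k$, so that $\partial C$ is a single surface $\Sigma$ of genus exactly $k$. Here the assumption $\beta_1(M)>k$ enters a second time: because the image of $H$ in $H_1(M;\mathbb{Q})$ has rank $\le k<\beta_1(M)$, there is a nonzero $\phi\in H^1(M;\mathbb{Z})$ with $\phi|_H=0$. Realising $\phi$ by a map $M\to S^1$ whose regular fibre is compressed and desphered yields an embedded incompressible oriented surface $S\subset M$ Poincar\'e dual to $\phi$. Put the immersion $\Sigma\looparrowright M$ afforded by $\Sigma\hookrightarrow C\subset N\to M$ transverse to $S$. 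Because $\phi|_{\pi_1(\Sigma)}=0$, the intersection $\Sigma\cap S$ is null-homologous on $\Sigma$ and hence bounds a proper subsurface $\Sigma'\subset\Sigma$. A cut-and-paste of $\Sigma'$ against a corresponding subsurface of $S$ (possible because $\Sigma\cap S$ simultaneously represents a homological relation on both sides) then assembles a closed \pinj surface in $M$ of positive genus strictly less than $k$, again contradicting the hypothesis.

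The principal obstacle is this last genus-exactly-$k$ case: controlling the topology of $\Sigma\cap S$ carefully enough to extract a closed \pinj surface of strictly smaller positive genus. One must verify that after discarding trivial disk and annulus components of $\Sigma\setminus(\Sigma\cap S)$, the resulting surface is neither empty, nor a sphere, nor a parallel copy of $\Sigma$, and that $\pi_1$-injectivity into $\pi_1(M)$ survives the cut-and-paste. It is here that the combined force of both hypotheses---absence of \pinj surfaces of genus below $k$ and $\beta_1(M)>k$---is indispensable; dropping either of them causes the argument to collapse precisely at this step.
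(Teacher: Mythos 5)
The paper does not supply a proof of this statement; it is quoted verbatim from Baumslag--Shalen \cite{BSh} as a black box, so there is no internal argument to compare yours against. Judging your proposal on its own terms: the overall strategy (pass to the cover $N$ with $\pi_1(N)=H$, take a Scott compact core, extract incompressible boundary, bound the boundary genus by the half-lives-half-dies inequality, and derive a contradiction with the no-small-surface hypothesis) is indeed the standard route to this kind of $k$-freeness statement, and steps such as the reduction to infinite index via transfer are correct. However, there are two genuine problems.

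First, the phrase ``incompressify $\partial C$ while preserving $\pi_1(C)$'' is not correct as stated. Capping off sphere components of $\partial C$ by balls in $N$ does preserve $\pi_1$, but compressing along an essential disk does \emph{not}: it induces a free product decomposition of $\pi_1(C)$ (splitting off a $\mathbb{Z}$ factor or splitting into two free factors). What actually happens is that repeated compression produces a Grushko-type decomposition $H \cong H_1 * \cdots * H_m * F_r$, where $H_i = \pi_1(C_i)$ for compact irreducible pieces $C_i$ with nonempty incompressible boundary; one then has $\operatorname{rank}(H_i)\le k$ (Grushko) and applies half-lives-half-dies to each $C_i$ separately, with injectivity $\pi_1(\Sigma)\hookrightarrow\pi_1(C_i)\hookrightarrow H\hookrightarrow\pi_1(M)$ surviving because each $H_i$ is a free factor of $H$. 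You need this corrected framework in place of the ``$\pi_1$-preserving'' claim.

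Second, and more seriously, the terminal case ($m=1$, $\partial C$ a single incompressible surface $\Sigma$ of genus exactly $k$, $\beta_1(C)=k$) is not handled: the cut-and-paste of the immersed $\Sigma$ against a norm-minimizing surface $S$ dual to a cohomology class $\phi$ with $\phi|_H=0$ is only a heuristic. As you yourself note, controlling the topology and $\pi_1$-injectivity of the pieces of $\Sigma\setminus S$ is exactly where such arguments break, and you give no mechanism guaranteeing that the resulting surface has positive genus strictly below $k$ (the compressions and despherings you invoke can destroy both the homology class and the genus control, and Haken sums of immersed surfaces need not be $\pi_1$-injective). This step is a real gap; one would need to either find an argument showing that the equality case $\beta_1(C)=\sum g_i=k$ already forces a contradiction with $\beta_1(M)>k$, or replace the cut-and-paste by a concrete construction. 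Until that is supplied, the proof is incomplete.
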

In a subsequent paper, Shalen and Wagreich obtained a similar result under an assumption on the rank of $H_1(M, \Z_p)$ instead of $H_1(M,\Z)$. The latter also can be used for the applications similar to the ones considered below, but we will not need it in this paper.

Assuming that we have the required information about the first Betti number, we can apply this theorem together with the results of the previous sections in order to give lower bounds for $\Nfr(\Gamma)$. The growth of the first Betti number of arithmetic quotient spaces was intensively studied during the last decades with several strong results appearing very recently. Important contributions were made by M.~Gromov, F.~Grunewald, J.~Millson, J.~Rohlfs, J.~Schwermer, and more recently by N.~Bergeron, M.~Cossutta, S.~Marshall, to name a few. Let us cite some corollaries of the results which are suitable for our applications:
\begin{quote}{\em
\begin{itemize}
\item[(1)] (Xue \cite{Xue}) If $M$ is an arithmetic hyperbolic $3$-manifold whose group is defined by a quadratic form and $M_i \to M$ is a sequence of its congruence covers, then $\log \beta_1(M_i) \gtrsim \frac13 \log\vol(M_i)$.
\item[(2)] (Calegari--Emerton \cite{CE}) If $M$ is a compact arithmetic hyperbolic $3$-manifold then for almost all primes $p$ the growth of the first Betti numbers in a sequence $M_i \to M$ of the congruence covers corresponding to a $p$-adic analytic tower satisfies $\log \beta_1(M_i) \gtrsim \frac56 \log\vol(M_i)$.
\item[(3)] (Kionke--Schwermer \cite{KiS}) If $M$ is an arithmetic $3$-manifold such that the underlying quaternion algebra meets certain conditions, then there exists a sequence of congruence covers $M_i \to M$ such that the first Betti numbers of $M_i$ satisfy $\log \beta_1(M_i) \gtrsim \frac12 \log\vol(M_i)$.
\end{itemize}
}\end{quote}
In all these cases we see that under some additional arithmetic assumptions the rank of  $H_1(M_i; \Z)$ grows at least as fast as $\vol(M_i)^\alpha$ with an explicit constant $1/3 \le \alpha < 1$. It is conjectured that similar asymptotic behavior takes place for any infinite sequence of congruence subgroups $\Gamma_i < \Gamma$ with $\cap_i\Gamma_i = \{\mathrm{id}\}$ (or, equivalently, with $\sys_1(\Hy^3/\Gamma_i) \to \infty$).

Together with Proposition \ref{prop1} and Theorem \ref{thm_shalen}, these estimates allow us to obtain the lower bounds for $\Nfr(M_i)$. The following theorem summarizes the results obtained this way:
\begin{theorem}\label{thm2}
Let $M_i \to M$ be a sequence of the congruence covers of a compact orientable arithmetic hyperbolic $3$-manifold $M$ which satisfies the assumptions in (1), (2), or (3). Then
$$\log \Nfr(M_i) \gtrsim \frac13 \log \vol(M_i), \text{ as } i\to\infty.$$
\end{theorem}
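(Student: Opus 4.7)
The plan is to combine the Baumslag--Shalen criterion (Theorem~\ref{thm_shalen}) with the systolic genus bound from Proposition~\ref{prop1} and the three families of Betti number estimates listed as (1), (2), and (3). First I would verify that the hypotheses of Theorem~\ref{thm_shalen} are available for each cover $M_i$: the covers are closed (as finite covers of the compact manifold $M$), orientable (since $M$ is), and irreducible (a standard consequence of the sphere theorem applied to a closed hyperbolic $3$-manifold, whose universal cover $\Hy^3$ is contractible).

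The key step is the selection of a suitable integer $k_i$ for each $i$ so that both hypotheses of Theorem~\ref{thm_shalen} hold at $M_i$. By the very definition of $\sysg(M_i)$, the group $\pi_1(M_i)$ has no surface subgroup of genus $g$ with $0 < g < \sysg(M_i)$, so the first hypothesis is automatic provided that $k_i \le \sysg(M_i)$. The second hypothesis $\beta_1(M_i) > k_i$ is direct. I would therefore take
$$k_i := \min\bigl\{\sysg(M_i),\ \beta_1(M_i) - 1\bigr\}.$$

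Combining the two asymptotic lower bounds is the remaining task. Proposition~\ref{prop1} gives $\log \sysg(M_i) \gtrsim \frac13 \log \vol(M_i)$, while each of (1), (2), (3) supplies $\log \beta_1(M_i) \gtrsim \alpha \log \vol(M_i)$ with $\alpha$ equal to $\frac13$, $\frac56$, or $\frac12$ respectively, all at least $\frac13$. Since $\log\min = \min\log$, one obtains $\log k_i \gtrsim \frac13 \log \vol(M_i)$. Applying Theorem~\ref{thm_shalen} then shows that $\pi_1(M_i)$ is $k_i$-free, hence $\Nfr(M_i) \ge k_i$, and the stated conclusion follows upon taking logarithms.

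I do not expect a real obstacle; the argument is a direct assembly of the pieces already in place. The only point deserving attention is that the systolic genus estimate, and not the Betti number estimate, is the bottleneck in all three cases, so the faster growth $\alpha = \frac56$ available in case (2) is not reflected in the final exponent $\frac13$. This also suggests that any improvement of the exponent in Proposition~\ref{prop1} (and hence in Theorem~\ref{thm1}) would automatically improve Theorem~\ref{thm2}, at least in cases (2) and (3).
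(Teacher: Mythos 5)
Your proposal is correct and follows essentially the same route the paper intends: combine the Baumslag--Shalen criterion (Theorem~\ref{thm_shalen}) with the systolic genus lower bound from Proposition~\ref{prop1} and the Betti number estimates (1)--(3), then choose $k_i = \min\{\sysg(M_i), \beta_1(M_i)-1\}$ to satisfy both hypotheses of Theorem~\ref{thm_shalen}. The paper states this derivation in a single sentence without spelling out the choice of $k_i$; your write-up supplies the missing details correctly, including the observation that the systolic genus bound, not the Betti number bound, is the bottleneck in all three cases.
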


We now come back to a conjecture of Gromov discussed in the introduction. Recall that it says that $\Nfr(M)$ is expected to grow exponentially with respect to the systole $\sys_1(M)$. The systole of a hyperbolic $3$-manifold is bounded above by the logarithm of its volume. Indeed, a manifold $M$ with a systole $\sys_1(M)$ contains a ball of radius $r = \sys_1(M)/2$. The volume of a ball in $\Hy^3$ is given by $\vol(B(r)) = \pi(\sinh(2r)-2r)$, hence we get
\begin{align*}
& \vol(M) \ge \pi (\sinh(\sys_1(M)) - \sys_1(M)) \sim \frac{\pi}2 e^{\sys_1(M)};\\
& \log \vol(M) \gtrsim \sys_1(M), \text{ as } \sys_1(M)\to\infty.
\end{align*}
It is well known and was already used in the proof of Proposition~\ref{prop1} that the systole of the congruence covers $M_i\to M$ grows as $i\to\infty$. Together with Theorem~\ref{thm2} this allows us to bound $\Nfr(M_i)$ in terms of $\sys_1(M_i)$, thus proving Gromov's conjecture for the groups under consideration:
\begin{cor}\label{cor:gromov}
For the congruence covers $M_i \to M$ as in Theorem~\ref{thm2}, we have
$$\log \Nfr(M_i) \gtrsim \frac13\,\sys_1(M_i),\text{ as } i\to\infty.$$
\end{cor}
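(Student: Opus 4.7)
The plan is to chain together two asymptotic inequalities that are already in hand: Theorem~\ref{thm2} supplies $\log \Nfr(M_i) \gtrsim \tfrac{1}{3}\log\vol(M_i)$, and the ball-volume computation carried out in the paragraph immediately preceding the corollary gives $\log\vol(M_i) \gtrsim \sys_1(M_i)$. Composing these two should produce the desired $\log \Nfr(M_i) \gtrsim \tfrac{1}{3}\sys_1(M_i)$. In this sense the corollary is a direct repackaging of Theorem~\ref{thm2} in the geometric language of the systole, which is the form matching Gromov's conjecture.

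Before composing, I would verify that both inequalities apply to the same asymptotic regime. The first one holds as $i\to\infty$ along the congruence tower; the second is phrased in terms of $\sys_1(M)\to\infty$. These regimes coincide, since it is a standard fact, used already in the proof of Proposition~\ref{prop1} via \cite{KSV}, that $\sys_1(M_i)\to\infty$ along a sequence of congruence covers with $\mathrm{Norm}(I_i)\to\infty$. Hence, for any $\epsilon>0$, I can choose $i_0$ large enough that both asymptotic inequalities hold simultaneously for $i \ge i_0$ with a common $\epsilon$-slack.

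The chaining is then pure bookkeeping with the notation paragraph's definition of $\gtrsim$: given $\epsilon>0$, pick $\epsilon'>0$ so that $(1-\epsilon')^2 \ge 1-\epsilon$, and for $i$ sufficiently large combine the two lower bounds to obtain
$$\log \Nfr(M_i) \;\ge\; \tfrac{1-\epsilon'}{3}\log\vol(M_i) \;\ge\; \tfrac{(1-\epsilon')^2}{3}\sys_1(M_i) \;\ge\; \tfrac{1-\epsilon}{3}\sys_1(M_i),$$
which is precisely the asymptotic statement of the corollary.

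There is no substantive obstacle here; the serious geometric content sits in Theorem~\ref{thm1} (combined with the growth of the systole along congruence covers from \cite{KSV} and the Betti-number bounds (1)--(3), which together yield Theorem~\ref{thm2}). What the corollary exploits is the elementary fact that in a closed hyperbolic $3$-manifold the systole is at most logarithmic in the volume, so any volume-driven lower bound with exponent $\alpha$ automatically produces a systole-driven exponential lower bound with rate $\alpha$. The only care needed is to make sure, in the $\epsilon$-accounting, that the composition of two ``$\gtrsim$'' relations does not silently lose a factor; the choice $\epsilon'$ above handles this.
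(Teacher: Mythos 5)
Your proposal is correct and follows exactly the paper's route: chain Theorem~\ref{thm2} with the ball-volume estimate $\log\vol(M_i) \gtrsim \sys_1(M_i)$, observing that $\sys_1(M_i)\to\infty$ along the congruence tower so both asymptotic inequalities apply in the same regime. The paper leaves the $\epsilon$-bookkeeping implicit, but your explicit handling of it is the same argument.
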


\section{Questions and comments about higher dimensions} \label{sec:questions}
\subsection{} \label{sec:questions1}
We can ask a natural question about generalization of the results of this paper to the higher dimensional spaces (and possibly other word hyperbolic groups). One may conjecture that similar statements to our theorems, propositions, and corollaries should hold in much wider generality. We now briefly review the results which can be used for this purpose and some difficulties arising on the way.

We begin with a general version of Theorem~\ref{thm1}.

\begin{theorem}\label{thm1A}
Let $M$ be a closed $n$-dimensional Riemannian manifold whose curvature satisfies $-K \le K(M) \le - k < 0$. For any $\epsilon > 0$, assuming that the systole $\sys_1(M)$ is sufficiently large, we have
$$\sysg(M) \ge e^{(\frac{\sqrt{k}}{2}-\epsilon)\sys_1(M)}.$$
\end{theorem}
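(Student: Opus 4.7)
My plan is to follow the proof of Theorem~\ref{thm1} verbatim, noting that only two of its ingredients are genuinely three-dimensional or hyperbolic: the appeal to Schoen--Yau/Sacks--Uhlenbeck for an \emph{unbranched} minimal immersion, and Thurston's area bound \eqref{eq2}. Everything else is generic---the pullback argument giving a metric $\rho$ on $S_g$ with $\sys_1(S_g,\rho)\ge\sys_1(M)$, Besicovitch's inequality \eqref{eq3}, and Gromov's high-genus systolic inequality \eqref{eq1}---and continues to apply even when $\rho$ has conical singularities. Negative curvature implies $M$ is aspherical, so no bubbling occurs and a (possibly branched) area-minimizing immersion $h\colon S_g \to M$ realizing the inclusion $\pi_1(S_g)\hookrightarrow\pi_1(M)$ exists by Sacks--Uhlenbeck.

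In dimensions $\ge 4$ the map $h$ may have a finite set of branch points. I would argue this causes no essential harm: $h$ is smooth and $\pi_1$-injective off these points, and the pullback metric $\rho$ extends to a metric on $S_g$ with isolated conical singularities (cone angle $2\pi n_p > 2\pi$ at a branch point of order $n_p$). Since branch points are isolated, every non-contractible loop can be homotoped to avoid them, so $\sys_1(S_g,\rho)\ge\sys_1(M)$ persists. I would then establish the correct analogue of Thurston's inequality,
$$\area(S_g,\rho) \le \frac{4\pi(g-1)}{k},$$
as follows. On the smooth locus the Gauss equation for a minimal immersion gives $K_\rho = K_M(T_p S_g) - |II|^2 \le -k$, since the ambient sectional curvature along $TS_g$ is at most $-k$ and minimality forces the second fundamental form term to be nonpositive. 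The Gauss--Bonnet formula for a metric with conical singularities reads $\int_{S_g} K_\rho\,dA = 2\pi\chi(S_g) + 2\pi\sum_p(n_p-1)$; the branch-point contributions are nonnegative, so combining with $K_\rho\le -k$ yields the displayed bound (in fact strengthened by an extra nonpositive term).

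With these two modifications in place, the remainder of the proof carries over word for word. Besicovitch's inequality applied to the singular $\rho$ gives $\sys_1(S_g,\rho)^2 \le 2\area(S_g,\rho) \le 8\pi(g-1)/k$, guaranteeing $\sysg(M)\to\infty$ when $\sys_1(M)\to\infty$ so that \eqref{eq1} becomes applicable. The same chain of inequalities as in Theorem~\ref{thm1}, with $4\pi(g-1)$ replaced by $4\pi(g-1)/k$, then produces
$$\log g \ \ge \ \sys_1(S_g,\rho)\sqrt{\frac{k(\pi-\epsilon_0)g}{4\pi(g-1)}}\ \ge\ \left(\tfrac{\sqrt{k}}{2}-\epsilon\right)\sys_1(M),$$
which is the claimed inequality. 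The main obstacle I anticipate is checking that the Sacks--Uhlenbeck minimizer and the Gromov--Katz--Sabourau systolic inequality both behave as expected in the presence of finitely many conical singularities; both facts should be routine but deserve verification. The upper curvature bound $-K$ in the hypothesis does not appear to enter the argument and may well be dispensable.
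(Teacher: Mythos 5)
Your argument is correct, but it takes a genuinely different route from the paper's. The paper dispenses with minimal surfaces entirely for Theorem~\ref{thm1A}: it applies the Eells--Sampson theorem to produce a \emph{harmonic} map $\bar{f}$ homotopic to $f$ and then cites Reznikov's generalized Thurston inequality $\area(\bar{f}) \le 4\pi(g-1)/k$ directly; the author even adds a remark that this harmonic-map argument is deliberately simpler than the Schoen--Yau/Sacks--Uhlenbeck route used for Theorem~\ref{thm1}, precisely because it avoids any branched minimal surface technicalities. You instead keep the Sacks--Uhlenbeck minimizer (valid since asphericity kills bubbling), accept the possible branch points in ambient dimension $\ge 4$, and then re-derive the area bound from scratch: Gauss equation for a minimal immersion gives $K_\rho \le -k$ on the smooth locus, and Gauss--Bonnet for a conical metric gives $\int K_\rho\,dA = 2\pi\chi + 2\pi\sum_p(n_p-1)$, so the branch-point defects only strengthen the bound $\area \le 4\pi(g-1)/k$. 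Both routes are sound; yours has the advantage of being self-contained (it reproves rather than cites the Reznikov-type inequality and makes the role of minimality explicit), while the paper's is cleaner and sidesteps the need to verify that Besicovitch's inequality and the Gromov--Katz--Sabourau bound tolerate isolated conical singularities — a point you correctly flag as needing verification but which simply does not arise in the harmonic-map version. Your remark that the lower curvature bound $-K$ never enters also matches the paper, which likewise makes no use of it in the proof.
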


\begin{proof}
As before, let $f: S_g \to M$ be a continuous \pinj map of a surface of genus $g >0$ into $M$. By the classical Eells--Sampson theorem there exists a harmonic map $\bar{f}$ homotopic to $f$. In \cite[Theorem~C.2]{Rez2}, Reznikov showed that $\bar{f}$ satisfies a generalized Thurston's inequality:
$$\area(\bar{f}) \le \frac{4\pi(g-1)}{k}.$$
With this result at hand, we can repeat the rest of the proof of Theorem~\ref{thm1} in the general setting.
\end{proof}

\begin{rmk}
This argument is simpler than the proof in Section~\ref{sec:genus} because it does not require the results of \cite{SY} or \cite{SU} and, of course, we could have used it also for the proof of Theorem~\ref{thm1}. It may occur, however, that the minimal immersions employed there have other interesting properties which may find further applications in the future. This is the main reason why we decided to include a special argument for the $3$-dimensional case. Some of these properties will already come into play in the discussion in Section~\ref{sec:questions2}.
\end{rmk}

The notions of an arithmetic hyperbolic $3$-manifold and its congruence covers are part of the general theory of arithmetic locally symmetric spaces. A short introduction to the theory can be found for example in \cite[Section~10.3]{MR}. Assuming that the reader is familiar with the definitions we will now present a generalization of the results of Section~\ref{sec:congruence}.

\begin{prop} Let $\Gamma$ be a fundamental group of a closed arithmetic hyperbolic $n$-manifold, $n\ge 3$.
\begin{itemize}
\item[(A)] There exists constant $C>0$ such that given a decreasing sequence $\Gamma_i < \Gamma$ of congruence subgroups of $\Gamma$, the corresponding quotient manifolds $M_i = \Hy^n/\Gamma_i$ satisfy
$$\log \sysg(M_i) \gtrsim C \log \vol(M_i), \text{ as } i \to \infty.$$
\item[(B)] Assume that $\Gamma$ is defined by a quadratic form. Then $\Gamma$ has a sequence of congruence subgroups $\Gamma_i$ such that the corresponding quotient manifolds satisfy
$$\sysg(M_i) \lesssim \vol(M_i)^{\frac{6}{n(n+1)}}, \text{ as } i \to \infty.$$
\end{itemize}
\end{prop}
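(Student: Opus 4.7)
\textbf{Part (A).} The plan is to combine Theorem~\ref{thm1A} with a higher-dimensional analog of the systole lower bound from \cite{KSV}. Since $\Hy^n$ has constant curvature $-1$, Theorem~\ref{thm1A} applies with $k=1$ to give
$$\log \sysg(M_i) \gtrsim \tfrac12\sys_1(M_i),\text{ as } \sys_1(M_i)\to\infty.$$
So it suffices to show that the systole of a sequence of congruence covers of a closed arithmetic hyperbolic $n$-manifold grows at least like $C'\log\vol(M_i)$ for some $C'=C'(n)>0$. The argument in \cite{KSV} adapts to this setting: a non-identity element of a principal congruence subgroup $\Gamma(I)$ must reduce to the identity modulo $I$, which forces its trace (and hence translation length) to be bounded below by roughly $\log\mathrm{Norm}(I)$; meanwhile $\vol(M_i) \sim \mathrm{Norm}(I)^{\dim \SO(n,1)} = \mathrm{Norm}(I)^{n(n+1)/2}$. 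Composing yields $\sys_1(M_i) \gtrsim \frac{4}{n(n+1)}\log\vol(M_i)$ (up to an additive constant), and the proposition follows with $C = 2/(n(n+1))$ or better.

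\textbf{Part (B).} The plan mimics Proposition~\ref{prop2}, replacing the $3$-dimensional form there by an $(n+1)$-dimensional one. Since $\Gamma$ is defined by a quadratic form, after passing to a finite-index subgroup we may assume $\Gamma = \SO^\circ(f,\cOl)$ for a diagonal form
$$f = -a_0 x_0^2 + a_1 x_1^2 + \cdots + a_n x_n^2,$$
with $a_i>0$ algebraic integers in a totally real field $\ell\subset\R$, and such that for every non-identity Galois embedding $\sigma:\ell\hookrightarrow\R$ we have $\sigma(a_0)<0$ and $\sigma(a_i)>0$ for $i\ge 1$. Let $f' = -a_0 x_0^2 + a_1 x_1^2 + a_2 x_2^2$ be the restriction to the first three coordinates. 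Then $f'$ has signature $(2,1)$ at the identity embedding and is positive definite at all other embeddings, so $\Gamma' := \Gamma \cap \SO^\circ(f')$ is a cocompact arithmetic Fuchsian group whose quotient is an immersed totally geodesic surface in $M$.

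The strong approximation theorem (cf.~\cite[Theorem~7.7.5]{MR}) yields, for almost all ideals $I\subset\cOl$, the commutative diagram
$$
\begin{CD}
1 @>>> \Gamma(I) @>>> \Gamma @>>> \SO^\circ(f,\cOl/I) @>>> 1 \\
@. @AAA @AAA @AAA @.\\
1 @>>> \Gamma'(I) @>>> \Gamma' @>>> \SO^\circ(f',\cOl/I) @>>> 1 \\
\end{CD}
$$
with surjective right-hand maps. Writing $q = |\cOl/I|$, we compute
$$[\Gamma:\Gamma(I)] \sim q^{\dim \SO(n,1)} = q^{n(n+1)/2}, \qquad [\Gamma':\Gamma'(I)] \sim q^{\dim \SO(2,1)} = q^3.$$
Hence $\vol(M(I)) \sim q^{n(n+1)/2}$ while the totally geodesic surface $M'(I)\hookrightarrow M(I)$ has area $\sim q^3$; by Gauss--Bonnet its genus grows at the same rate. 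Since $\pi_1(M'(I))$ embeds in $\pi_1(M(I))$, we conclude
$$\sysg(M(I)) \le g(M'(I)) \sim q^3 \sim \vol(M(I))^{6/(n(n+1))}.$$

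\textbf{Main obstacle.} Part (B) is essentially bookkeeping once the form $f'$ is correctly chosen. The real issue is Part (A): one must either cite or prove a generalization of \cite[Theorem~1.8]{KSV} from $n=3$ to arbitrary $n$. The proof in \cite{KSV} uses arithmetic input (trace bounds modulo $I$) together with the hyperbolic geometry of the manifold, and both ingredients have higher-dimensional analogs; nonetheless, verifying that the argument produces an explicit constant $C'(n)>0$ is the most delicate step and would have to be carried out with some care.
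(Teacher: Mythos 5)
Part (B) of your proposal matches the paper's argument essentially step for step: pick the $3$-dimensional subform $f'$, use strong approximation to get the same commutative diagram as in Proposition~\ref{prop2}, and compare $q^3$ against $q^{n(n+1)/2}$. That is exactly what the paper does.

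Part (A) is where your route diverges. You propose to derive the systole lower bound $\sys_1(M_i) \gtrsim C'\log\vol(M_i)$ by directly generalizing the trace-bound argument of Katz--Schaps--Vishne \cite{KSV} to arithmetic hyperbolic $n$-manifolds, and you even sketch a candidate explicit constant $C'= 4/(n(n+1))$. The paper does \emph{not} take this route and is noticeably more cautious: it cites the restriction-of-scalars argument in \cite[Section 3.C.6]{Gromov:lectures} to get the \emph{existence} of some $C>0$ (possibly depending on $\Gamma$), and then explicitly states that obtaining an explicit or $\Gamma$-independent constant ``would require a generalization of the main result of \cite{KSV} to the arithmetic hyperbolic $n$-manifolds'' — i.e., exactly the step you are proposing, but which the paper declines to carry out. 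So your plan is aiming at a stronger statement than the one actually proved; you do flag this as the ``main obstacle,'' which is honest, but as written your Part (A) is not yet a proof of the proposition, whereas the paper closes the gap by invoking the weaker restriction-of-scalars existence result.

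Two additional caveats on the sketched KSV generalization. First, the step from ``reduction to identity mod $I$ forces a trace bound'' to ``translation length $\gtrsim \log\mathrm{Norm}(I)$'' is where the real content lies; in rank one hyperbolic geometry one needs to relate the displacement of a semisimple element to the size of its matrix entries in a chosen integral model, and this has to be done uniformly across the appropriate conjugacy classes, not just heuristically for traces. Second, your volume asymptotic $\vol(M_i) \sim \mathrm{Norm}(I)^{n(n+1)/2}$ is tailored to lattices defined by quadratic forms; Part (A) of the proposition is stated for a \emph{general} closed arithmetic hyperbolic $n$-manifold, and for the exceptional constructions (e.g.\ the octonionic ones in dimension $7$, or the quaternionic ones in dimension $3$) the indices of congruence subgroups are computed from a different algebraic group, so that identity does not hold verbatim. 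The restriction-of-scalars argument the paper cites avoids both of these pitfalls precisely because it does not try to pin down the constant.
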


\begin{proof}
(A) This is a generalized version of Proposition~\ref{prop1}. In view of Theorem~\ref{thm1A}, it follows from a lower bound on $\sys_1(M_i)$ in terms of $\log\vol(M_i)$. Such a bound in a general setting can be proved by using the restriction of scalars (see \cite[Section 3.C.6]{Gromov:lectures}). This method, however, does not provide a good estimate for the constant $C$. Computing the explicit constant or proving that it does not depend on the group $\Gamma$ would require a generalization of the main result of \cite{KSV} to the arithmetic hyperbolic $n$-manifolds.

(B) This is a generalization of Proposition~\ref{prop2}. The assumption that $\Gamma$ is defined by a quadratic form implies that $M = \Hy^3/\Gamma$ contains a totally geodesic $2$-dimensional surface $M'$ (see e.g. \cite{Xue}). We can now repeat the argument of Proposition~\ref{prop2} taking into account that if $f$ is a quadratic form of dimension $n+1$, then $|\SO^\circ(f,\cOl/I)| \sim q^{n(n+1)/2}$, where $q$ is the order of $\cOl/I$. So we obtain
$$\area(M'(I)) \sim q^3 \sim \vol(M(I))^{\frac{6}{n(n+1)}}, \text{ as } q \to \infty.$$
\end{proof}

\begin{rmk}
An analogous result can also be stated for the other rank one arithmetic locally symmetric manifolds. The proof of Part~(A) is the same, and Part~(B) differs only in some technical details and the values of the constants.
\end{rmk}

The main challenge for higher dimensions is about extending results of Section~\ref{sec:free}. Recall that here we combined the estimates for the first Betti numbers of congruence covers with a theorem of Baumslag--Shalen in order to show that certain congruence subgroups $\Gamma_i$ are asymptotically $\vol(\Hy^3/\Gamma_i)^{1/3}$-free. Results about the first Betti number are available in much wider generality. The problem is with Theorem~\ref{thm_shalen}, whose proof uses $3$-dimensional hyperbolic geometry in an essential way. It is not known how to generalize this theorem to dimensions higher than $3$.

\subsection{} \label{sec:questions2}
In the proof of Theorem~\ref{thm1} we used Thurston's inequality \eqref{eq2}. There is also an inequality in the other direction which gives a lower bound for the area of any \pinj surface in a hyperbolic $3$-manifold:
\begin{equation}\label{eq5}
2\pi(g-1) \le \area(S_g, \rho).
\end{equation}
A proof of this inequality can be found in \cite[Lemma~6]{Hass}, where it is attributed to an unpublished work of Uhlenbeck. Another proof is given in \cite[Theorem~1]{Rez1}, but it is incomplete and as a result a factor $\frac12$ is missing in the final inequality there. The proof uses stability of minimal surfaces immersed in the manifold and the Gauss--Bonnet theorem. In contrast with the upper bound \eqref{eq2}, this inequality is known only for dimension $n = 3$.

As a corollary of \eqref{eq2} and \eqref{eq5} we can now state an equivalence between $\sysg$ and $\sys_2$ which was mentioned before:
\begin{prop} Let $M$ be a closed hyperbolic $3$-manifold. Then its systolic genus and $2$-dimensional systole satisfy inequalities
\begin{equation*}
2\pi(\sysg(M) - 1) \le \sys_2(M) \le 4\pi(\sysg(M) - 1).
\end{equation*}
\end{prop}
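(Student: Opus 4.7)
The plan is to obtain the two bounds separately by matching inequalities \eqref{eq2} and \eqref{eq5} against the definitions of $\sysg$ and $\sys_2$, without any further geometric input.

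For the upper bound $\sys_2(M)\le 4\pi(\sysg(M)-1)$, I would set $g=\sysg(M)$ and, as in the opening of the proof of Theorem~\ref{thm1}, use that $\pi_1(M)\supset\pi_1(S_g)$ to produce a \pinj continuous map $f:S_g\to M$. The Schoen--Yau/Sacks--Uhlenbeck theorem then yields a minimal immersion $h:S_g\to M$ with $h_*=f_*$ on $\pi_1$, to which Thurston's inequality \eqref{eq2} applies to give $\area(h)\le 4\pi(g-1)$. Since $h$ is a \pinj immersion of a positive-genus surface, the definition of $\sys_2(M)$ as an infimum forces $\sys_2(M)\le\area(h)\le 4\pi(\sysg(M)-1)$.

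For the lower bound $2\pi(\sysg(M)-1)\le\sys_2(M)$, I would take an arbitrary \pinj immersion $\iota:S_{g'}\to M$ of positive genus and observe that the induced monomorphism $\pi_1(S_{g'})\hookrightarrow\pi_1(M)$ forces $g'\ge\sysg(M)$ by the definition of the systolic genus. Inequality \eqref{eq5} applied to $\iota(S_{g'})$ with its induced metric then gives $\area(\iota)\ge 2\pi(g'-1)\ge 2\pi(\sysg(M)-1)$, and taking the infimum over all such $\iota$ yields the claim.

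The argument is essentially a bookkeeping exercise once the two ingredients \eqref{eq2} and \eqref{eq5} are in hand, so I expect no real technical obstacle. The only point meriting care is that in the upper bound one must explicitly invoke the minimal representative produced by Schoen--Yau/Sacks--Uhlenbeck so that Thurston's inequality is applicable, exactly as in the proof of Theorem~\ref{thm1}; in the lower bound the analogous replacement by a minimal surface (if needed) only decreases area while preserving the \pinj property, so \eqref{eq5} transfers to an arbitrary \pinj immersion without loss.
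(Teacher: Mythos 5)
Your proof is correct and follows essentially the same route as the paper: apply Thurston's inequality \eqref{eq2} to a least-area representative of the systolic genus for the upper bound, and transfer the Uhlenbeck/Hass lower bound \eqref{eq5} through minimal replacements to bound the area of every \pinj surface from below. The only small omissions are the explicit appeal to Kahn--Markovic for $\sysg(M)<\infty$ and the remark that $g=1$ is excluded because $M$ is closed hyperbolic, both of which the paper records before invoking \eqref{eq2} and \eqref{eq5}.
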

\begin{proof}
Assume that $\sys_2(M)$ is represented by a surface $S_0$ of genus $g_0$ and that $g_1 = \sysg(M)$ is represented by a minimal surface $S_1$. By \cite{KahM}, we have $g_0, g_1 < \infty$, and since $M$ is closed, $g_0, g_1 > 1$. Now inequalities \eqref{eq5} and \eqref{eq2} give
$$
2\pi(g_0 - 1) \le \sys_2(M) \quad\text{and}\quad \area(S_1) \le 4\pi(g_1-1).
$$
Since $g_1 \le g_0$, the first inequality gives $2\pi(g_1-1) \le \sys_2(M)$, while $\sys_2(M) \le \area(S_1)$ together with the second inequality imply that $\sys_2(M) \le 4\pi(g_1 - 1)$.
\end{proof}

A remarkable fact about inequality \eqref{eq5} is that it implies an absolute lower bound for the $2$-dimensional systole  of the closed hyperbolic $3$-manifolds:
\begin{equation} \label{eq_sys}
\sys_2(M) \ge 2\pi.
\end{equation}
This bound demonstrates a sharp contrast with the behavior of $\sys_1(M)$, which can be arbitrarily small, but is in a good agreement with $\sys_3(M) = \vol(M)$, which is bounded below by the Kazhdan--Margulis theorem \cite{KazM}.

In general, we can define $\sys_k(M)$ as the infimum of $k$-volumes of the subsets in $M$ which cannot be homotoped to $(k-1)$-dimensional subsets in $M$ (cf. \cite[Section~3.C.8]{Gromov:lectures}). Our knowledge about the lower bounds for $\sys_k(M)$, $k = 1,2,\ldots, n$ in higher dimensions and for other rank one locally symmetric spaces is very sparse. It was proved only recently that $\sys_1(M)$ of an $n$-dimensional closed hyperbolic manifold $M$ can be arbitrarily small for every $n$ (see \cite{BT} or \cite{BHW}, both developing in different ways an idea suggested by Agol in \cite{Agol} for $n = 4$). From the other hand, by Selberg's conjecture proved by Kazhdan and Margulis in \cite{KazM}, we always have $\sys_n(M)$ bounded uniformly from below.

These observations suggest that given a semisimple (real rank one) Lie group $H$ with the symmetric space $X$, we can define a constant
\begin{align*}
s(H) = \min\{k \mid \sys_k(M) \text{ is uniformly bounded from below for closed manifolds } \\ M\text{ covered by }X\}.
\end{align*}
We will call $s(H)$ by the \emph{Selberg number} of $H$. We expect that if $\sys_k(M)$ is uniformly bounded from below for some $k = k_0$, then it is also uniformly bounded for all $k > k_0$. If this is true, the Selberg number would define a sharp boundary between the two different types of behavior of $k$-dimensional systoles.

From the previous discussion we know that
$$s(\PO(2,1)) = 2,\ s(\PO(3,1)) = 2,\ 1<s(\PO(n,1))\le n, \text{ for } n\ge 4.$$
It would be very interesting to find better bounds for Selberg numbers of higher dimensional hyperbolic spaces. Good numerical values for the lower bounds for $\sys_k(M)$ are also important, but even less is known in this respect.

\end{document}